\DeclareMathAlphabet{\mathcal}{OMS}{cmsy}{m}{n} % but do not change mathcal symbols
\theoremstyle{plain} % the default
\newtheorem*{theorem}{Theorem}
\DeclareMathOperator{\End}{End}
\DeclareMathOperator{\FEnd}{FEnd}
\DeclareMathOperator{\Hom}{Hom}
\DeclareMathOperator{\Tr}{Tr}
\begin{document}

\title{A variant of Baer's theorem}
\author{Pasha Zusmanovich}
\address{University of Ostrava, Czech Republic}
\email{pasha.zusmanovich@osu.cz}
\date{First written September 15, 2022; last minor revision March 26, 2023}
\thanks{Rocky Mountain J. Math., to appear} 

\keywords{Baer's theorem; endomorphism ring; finitary linear map}
\subjclass[2020]{16S50}

\begin{abstract}
We provide a variant of Baer's theorem about isomorphism of endomorphism rings 
of vector spaces over division rings, where the full endomorphism rings are 
replaced by some subrings of finitary maps.
\end{abstract}

\vspace*{-0.8cm}
\maketitle

\pagestyle{fancy}
\setlength{\headheight}{0.5cm}

% the title is short:
\fancyhead[L]{\footnotesize P. Zusmanovich}
\fancyhead[C]{\footnotesize \MyTitle}

\fancyhead[R]{\footnotesize \thepage{}}
\fancyfoot[L,C,R]{}
\renewcommand{\headrulewidth}{0pt}
%%%%%%%%%%%%%%%%%%%%%%%%%%%%%%%%%%%%%%%%%%%%%%%%%%%%%%%%%%%%%%%%%%%%%%%%%%

Under Baer's theorem we mean the following result, formulated for the first 
time as Theorem 1 in \cite[Chapter V, \S 4]{baer}: if $V$ and $W$ are two 
(say, right) vector spaces over division rings $D$ and $F$ respectively, then 
the isomorphism of their endomorphism rings $\End_D(V)$ and $\End_F(W)$ implies
a semilinear isomorphism of $V$ and $W$. Baer's original proof uses properties 
of idempotents in the endomorphism ring; for a streamlined and modern exposition
see \cite[Theorem 4.1]{krylov}. Another proof offered in 
\cite[Theorem 8.1]{wolfson} (Ph.D. thesis under Baer) uses Jacobson's density 
theorem. In \cite{racine} and \cite{balaba} Baer's theorem is extended to the 
cases of super vector spaces and graded vector spaces over super or graded 
division rings, respectively. Later on Baer's theorem was also extended to 
modules over large classes of abelian groups (the so-called Baer--Kaplansky 
theorem), see \cite{krylov} and references therein for a survey.

If $V$ is infinite-dimensional over $D$, the ring $\End_D(V)$ is huge, and one 
may wonder whether in the formulation of Baer's theorem it can be replaced by
something smaller. Somewhat anachronistically, this was done, with small 
variations, already before Baer, in 1940s, in the works of Dieudonn\'e, 
Jacobson, and others, see \cite[Chapter IV, \S 11]{jacobson-rings} or 
\cite[Chapter IX, §11, Theorem 7]{jacobson}: the condition involves isomorphism
of rings of \emph{finitary} linear maps, and the proof uses again Jacobson's 
density theorem. The origin of these works seems to be in similar results 
established earlier in the analytic setting, for rings of bounded operators on 
Banach or Hilbert spaces, or rings of continuous operators on normed spaces, by 
Eidelheit, Mackey, and others; see historical references at 
\cite[p.~94]{jacobson-rings}.

Here we offer another variation on this topic. We retain a relatively narrow
context of vector spaces over division rings, and -- similarly to Jacobson and 
others -- instead the full endomorphism ring, consider its subrings of finitary
linear maps. However, the rings we consider are, in the infinite-dimensional 
case, rather ``small'', significantly ``smaller'' even than the ring of all 
finitary maps, so this can be viewed as a substantial extension of the original
Baer theorem. Moreover, our rings are, generally, not dense, thus all the 
previous methods of proofs of Baer's theorem and its variants, based on 
consideration of idempotents or other structural gadgets from ring theory, or on
Jacobson's density theorem, do not work. Instead, we use an elementary 
linear-algebraic technique of ``decomposing'' conditions imposed on linear maps
on tensor products, and consideration of traces.

\bigskip

All rings in this note are assumed to be associative. For the standard linear 
algebra over a (noncommutative) division ring, we refer to \cite{baer} or 
\cite{jacobson}. Some notation and terminology is also borrowed from \cite{lie}
(where things are treated in the Lie-algebraic context).

Let $D$ be a division ring with unit $1$, and $V$ a right vector space over $D$;
then the dual $V^*$ is a left vector space over $D$. A linear map is called 
\emph{finitary} if its kernel has finite codimension (or, what is equivalent, 
its image has finite dimension). Finitary maps have traces, defined in the usual
manner. The set of all finitary linear maps forms a subring $\FEnd_D(V)$ of the
endomorphism ring $\End_D(V)$, and is linearly spanned by 
\emph{infinitesimal transvections} $t_{v,f}: V \to V$, defined as 
$t_{v,f}(u) = vf(u)$, where $v,u \in V$ and $f \in V^*$ (infinitesimal 
transvections are exactly linear maps whose image is one-dimensional). The 
trace of an infinitesimal transvection is determined by the formula 
$\Tr(t_{v,f}) = f(v)$.

The ring $\FEnd_D(V)$ is isomorphic to the ring $V \otimes_D V^*$, with 
multiplication given by
\begin{equation}\label{eq-m}
(v \otimes f) \cdot (u \otimes g) = vf(u) \otimes g ,
\end{equation}
where $v,u \in V$, $f,g \in V^*$. The isomorphism is given by sending the
infinitesimal transvection $t_{v,f}$ to the decomposable tensor $v \otimes f$.

Now let $\Pi$ be a $D$-subspace of $V^*$. Then $V \otimes_D \Pi$ is still closed
with respect to the multiplication (\ref{eq-m}), and hence forms a subring of 
$V \otimes_D V^*$. Going back to $\End_D(V)$ and its subrings, the ring 
$V \otimes_D \Pi$ is isomorphic to the subring $\FEnd_D(V,\Pi)$ of $\FEnd_D(V)$
generated by all infinitesimal transvections $t_{v,f}$ with $v \in V$ and $f \in \Pi$.

Note that all the just mentioned rings, $\End_D(V)$, $\FEnd_D(V)$, and 
$\FEnd_D(V,\Pi)$, are also right vector spaces over $D$, and hence have a 
structure of a right $D$-algebra.

\begin{theorem}
Let $V, W$ be right vector spaces over a division ring $D$, $\Pi$ a nonzero
finite-dimensional subspace of $V^*$, $\Gamma$ a finite-dimensional subspace of
$W^*$, and $\Phi: \FEnd_D(V,\Pi) \to \FEnd_D(W,\Gamma)$ an isomorphism of 
$D$-algebras. Then there is an isomorphism of $D$-vector spaces 
$\alpha: V \to W$ such that
\begin{equation}\label{eq-conj}
\Phi(f) = \alpha \circ f \circ \alpha^{-1}
\end{equation}
for any $f \in \FEnd_D(V,\Pi)$.
\end{theorem}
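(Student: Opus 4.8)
The plan is to turn the conjugation relation \eqref{eq-conj} into a module-theoretic statement and then remove a scalar twist by means of traces. Put $R=\FEnd_D(V,\Pi)$ and $R'=\FEnd_D(W,\Gamma)$, and use the identifications $R\cong V\otimes_D\Pi$ and $R'\cong W\otimes_D\Gamma$ together with the multiplication \eqref{eq-m}. The space $V$ is a left $R$-module and $W$ a left $R'$-module via the natural actions, and through $\Phi$ we may also regard $W$ as a left $R$-module. In these terms \eqref{eq-conj} asserts exactly that some bijection $\alpha\colon V\to W$ satisfies $\alpha(rv)=\Phi(r)\,\alpha(v)$, i.e.\ that $\alpha$ is an isomorphism of $V$ onto $W$ as left $R$-modules. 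So the proof splits into two tasks: (i) produce such an intertwining bijection, and (ii) arrange that it be $D$-linear and not merely semilinear.

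For (i), since the rings at hand are not dense I would argue by hand with one explicit idempotent. Fix a nonzero $f_0\in\Pi$ and a $v_0\in V$ with $f_0(v_0)=1$, and set $e=v_0\otimes f_0$. Then $e^2=e$, the left ideal $Re$ equals $V\otimes f_0$, and $w\mapsto w\otimes f_0$ is an isomorphism of left $R$-modules $V\xrightarrow{\sim}Re$. The decisive observation is that $e$ being a \emph{rank-one} idempotent is a purely ring-theoretic condition: a rank-$r$ idempotent has corner ring $eRe\cong M_r(D)$, which is a division ring only when $r=1$, so $e$ is rank one iff $eRe\cong D$. As $\Phi$ is a ring isomorphism, $\Phi(e)R'\Phi(e)=\Phi(eRe)\cong D$, whence $\Phi(e)$ is again a rank-one idempotent; being a rank-one element of $W\otimes_D\Gamma$ it has the shape $w_0\otimes g_0$ with $g_0\in\Gamma$ and $g_0(w_0)=1$, and then $R'\Phi(e)=W\otimes g_0\cong W$. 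Composing the module isomorphism $V\cong Re$, the restriction of $\Phi$ to $Re$, and the module isomorphism $R'\Phi(e)\cong W$ produces an additive bijection $\alpha\colon V\to W$ with $\alpha(rv)=\Phi(r)\,\alpha(v)$, that is, \eqref{eq-conj}.

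It remains to treat (ii). The map $\alpha$ is a priori only semilinear: the $R$-endomorphisms of $V$ are precisely the right multiplications by elements of $D$, so $\End_R(V)\cong D^{\mathrm{op}}$, and conjugation by $\alpha$ transports these to the $R'$-endomorphisms of $W$, yielding a ring automorphism $\sigma$ of $D$ with $\alpha(vd)=\alpha(v)\,\sigma(d)$. Here the hypothesis that $\Phi$ is an isomorphism of $D$-\emph{algebras}, not merely of rings, enters through traces. A direct computation on infinitesimal transvections gives $\Phi(v\otimes f)=\alpha(v)\otimes(\sigma\circ f\circ\alpha^{-1})$, whence $\Tr\circ\Phi=\sigma\circ\Tr$. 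Since the trace is right $D$-linear for the ambient $D$-algebra structure and $\Phi$ commutes with the right $D$-action, evaluating on $e\cdot d$ and using $\Tr(e)=f_0(v_0)=1$ (so that $\Tr(\Phi(e))=g_0(w_0)=1$ as well) gives, for every $d\in D$,
\[
\sigma(d)=\sigma(\Tr(e\cdot d))=\Tr(\Phi(e\cdot d))=\Tr(\Phi(e)\cdot d)=\Tr(\Phi(e))\,d=d .
\]
Thus $\sigma=\mathrm{id}$, so $\alpha$ is $D$-linear, and \eqref{eq-conj} holds.

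I expect the main obstacle to lie in step (ii), the removal of the scalar twist $\sigma$: because density is unavailable, the reconstruction in (i) is carried out with a single concrete ideal $Re$, and the ambient right $D$-structure of $R$ does not visibly restrict to this ideal, so one cannot simply transport $D$-linearity along the isomorphisms. The identity $\Tr\circ\Phi=\sigma\circ\Tr$, combined with right $D$-linearity of the trace, is the device that circumvents this and pins down $\sigma=\mathrm{id}$; it is exactly here that the $D$-algebra hypothesis is indispensable, a bare ring isomorphism giving only a semilinear $\alpha$, as in the classical Baer theorem.
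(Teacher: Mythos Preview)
Your argument is correct and follows a genuinely different route from the paper's. The paper exploits finite-dimensionality of $\Pi$ to decompose $\Phi$ as a finite sum $\sum_{i\in I}\alpha_i\otimes\beta_i$ with the families $\{\alpha_i\}$ and $\{\beta_i\}$ linearly independent, extracts from the multiplicativity condition the identities $\beta_i(f)\circ\alpha_i=f$, and then uses traces to deduce $\Tr\circ\Phi=|I|\cdot\Tr$; combining this with the analogous relation for $\Phi^{-1}$ forces $|I|=|J|=1$, so that $\Phi=\alpha\otimes\beta$ is decomposable and the conjugation formula follows directly. You instead recognise rank-one idempotents ring-theoretically (via the corner $eRe$ being a division ring), transport $e$ through $\Phi$, and read off $\alpha$ from the chain of left $R$-module isomorphisms $V\cong Re\cong R'\Phi(e)\cong W$; traces enter only at the end, to eliminate the residual semilinear twist $\sigma$ by exploiting the $D$-linearity of $\Phi$. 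Your approach is more structural and, notably, never uses finite-dimensionality of $\Pi$ or $\Gamma$, so it actually yields a stronger statement than the one proved in the paper; it also shows that the paper's remark that methods ``based on consideration of idempotents \dots\ do not work'' here is somewhat overstated --- what is genuinely unavailable is density, but a single primitive idempotent and the associated module $Re$ suffice. The paper's tensor-decomposition argument, by contrast, stays entirely within elementary linear algebra and avoids module theory and corner rings altogether.
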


\begin{proof}
Write the rings $\FEnd_D(V,\Pi)$ and $\FEnd_D(W,\Gamma)$ in the isomorphic form
as the tensor products $V \otimes_D \Pi$ and $W \otimes_D \Gamma$ as above, and,
by abuse of notation, denote by the same symbol $\Phi$ the isomorphism of 
$D$-algebras $\Phi: V \otimes_D \Pi \to W \otimes_D \Gamma$. Due to 
finite-dimensionality of $\Pi$, we have isomorphism of vector spaces over $D$:
$$
\Hom_D(V \otimes_D \Pi, W \otimes_D \Gamma) \simeq 
\Hom_D(V,W) \otimes_D \Hom_D(\Pi,\Gamma) ,
$$
and hence we can write $\Phi$ in the form 
\begin{equation}\label{eq-d}
\Phi(v \otimes f) = \sum_{i\in I} \alpha_i(v) \otimes \beta_i(f) ,
\end{equation}
where $\alpha_i: V \to W$ and $\beta_i: \Pi \to \Gamma$ are two linearly 
independent families of $D$-linear maps, indexed by a finite set $I$. The 
condition that $\Phi$ is a homomorphism, written for a pair of decomposable 
tensors $v \otimes f$ and $u \otimes g$, where $v,u \in V, f,g \in \Pi$, is 
equivalent to
$$
\sum_{i\in I} \Big(
\alpha_i(v)f(u) -
\sum_{j\in I} \alpha_j(v) \beta_j(f)(\alpha_i(u))\Big) \otimes \beta_i(g)
= 0 .
$$

Since the family $\{\beta_i\}_{i\in I}$ is linearly independent over $D$, each 
first tensor factor in the external sum vanishes, i.e.,
$$
\alpha_i(v)f(u) - \sum_{j\in I} \alpha_j(v) \beta_j(f)(\alpha_i(u)) = 0 
$$
for any $i\in I$, $v,u \in V$, and $f\in \Pi$. This can be rewritten as
$$
\alpha_i(v)\Big(f(u) - \beta_i(f)(\alpha_i(u))\Big)
- \sum_{\substack{j\in I \\ j \ne i}} \alpha_j(v) \beta_j(f)(\alpha_i(u)) = 0 .
$$

Since the family $\{\alpha_i\}_{i\in I}$ is linearly independent over $D$, each 
coefficient from $D$ in the last sum vanishes; in particular, 
\begin{equation}\label{eq-fai}
\beta_i(f)(\alpha_i(u)) = f(u)
\end{equation}
for any $u\in V, f \in \Pi$, and $i\in I$.

This implies
\begin{multline*}
\Tr \Big(\Phi(u \otimes f)\Big) = 
\Tr \Big(\sum_{i\in I} \alpha_i(u) \otimes \beta_i(f)\Big) =
\sum_{i\in I} \Tr\Big(\alpha_i(u) \otimes \beta_i(f)\Big) \\ =
\sum_{i\in I} \beta_i(f)(\alpha_i(u)) = \sum_{i\in I} f(u) = |I| f(u) = 
|I| \Tr (u \otimes f) .
\end{multline*}

As the ring $V \otimes_D \Pi$ is linearly spanned by decomposable tensors, we
have
\begin{equation}\label{eq-tr}
\Tr(\Phi(\xi)) = |I|\Tr(\xi)
\end{equation}
for any $\xi \in V \otimes_D \Pi$. 

Now consider the inverse isomorphism 
$\Phi^{-1}: W \otimes_D \Gamma \to V \otimes_D \Pi$, with decomposition similar
to (\ref{eq-d}) with the index set $J$. By the same reasoning as in the case of
$\Phi$, we have
\begin{equation}\label{eq-tr1}
\Tr(\Phi^{-1}(\eta)) = |J|\Tr(\eta)
\end{equation}
for any $\eta \in W \otimes_D \Gamma$. Combining (\ref{eq-tr}) and 
(\ref{eq-tr1}), we have
\begin{equation}\label{eq-ij}
\Tr(\xi) = |I||J| \Tr(\xi)
\end{equation}
for any $\xi \in V \otimes_D \Pi$. 

Since for any nonzero $f \in V^*$ there is $v \in V$ such that $f(v) \ne 0$ 
(actually, we can choose $f(v)$ to be equal to $1$), $\FEnd_D(V,\Pi)$ always 
contains elements of nonzero trace\footnote{
This is a trivial, albeit a crucial point in our reasoning. Compare with the
condition of so-called totality in 
\cite[Chapter IX, \S 11, Theorem 7]{jacobson}, which, in our notation, amounts 
to saying, in a sense, a dual thing: for any nonzero $v \in V$ there is 
$f \in \Pi$ such that $f(v) \ne 0$. The latter condition is equivalent to the 
density of rings under consideration, and allows to use Jacobson's density 
theorem.
}
. Picking such an element $\xi$ in 
(\ref{eq-ij}), we have $|I| = |J| = 1$, i.e., $\Phi$ preserves traces and can be
represented as a decomposable linear map: $\Phi = \alpha \otimes \beta$ for some
$\alpha: V \to W$ and $\beta: \Pi \to \Gamma$. The system of equalities 
(\ref{eq-fai}) reduces to the single equality
\begin{equation}\label{eq-fa}
\beta(f) \circ \alpha = f
\end{equation}
for any $f \in \Pi$. As $\Phi$ is invertible, $\alpha$ and $\beta$ are 
invertible with $\Phi^{-1} = \alpha^{-1} \otimes \beta^{-1}$, so (\ref{eq-fa}) 
can be rewritten as $\beta(f) = f \circ \alpha^{-1}$, and hence 
$\Phi(v \otimes f) = \alpha(v) \otimes (f \circ \alpha^{-1})$. Rewriting the
last equality in terms of $\FEnd_D(V,\Pi)$ for an infinitesimal transvection
$t_{v,f}$, and expanding by linearity, we get (\ref{eq-conj}). 
\end{proof}

\bigskip

A couple of final remarks concerning possible extensions of the theorem:

\begin{enumerate}[\upshape(i)]
\item
We require the base ring $D$ to be a division ring in order to ensure that all 
$D$-modules are free. We can merely require that all the modules appearing in 
the formulation of the theorem, i.e., $V$, $W$, $\Pi$, $\Gamma$, as well as all 
the modules appearing in the course of the proof, are free, without imposing any
conditions on $D$, but this will just lead to a cumbersome formulation without 
changing the essence of the things.

\item
The theorem can be easily extended to the graded case (thus providing a 
``finitary'' analog of results from \cite{racine} and \cite{balaba}), with 
essentially the same proof which will keep track of the maps on each graded 
component separately. This is left as an exercise to the reader.
\end{enumerate}

%\section*{Acknowledgement}

\bigskip

Thanks are due to the anonymous referee for indicating an erroneous remark in 
the previous version of the manuscript.

\end{document}